\numberwithin{equation}{section}
\numberwithin{figure}{section}
\theoremstyle{plain}
\newtheorem{thm}{\protect\theoremname}
\theoremstyle{plain}
\newtheorem{cor}[thm]{\protect\corollaryname}
\theoremstyle{remark}
\newtheorem{rem}[thm]{\protect\remarkname}
\providecommand{\corollaryname}{Corollary}
\providecommand{\remarkname}{Remark}
\providecommand{\theoremname}{Theorem}
\begin{document}

\title{On Positivities of Certain q-Special Functions}

\author{Ruiming Zhang}

\address{College of Science\\
Northwest A\&F University\\
Yangling, Shaanxi 712100\\
P. R. China.}

\email{ruimingzhang@yahoo.com}

\keywords{q-series; Fourier transforms; Bochner theorem.}

\thanks{The work is supported by the National Natural Science Foundation
of China grants No. 11371294 and No. 11771355. }
\begin{abstract}
In this work we shall apply the Bochner's theorem to prove certain
combinations of Euler's q-exponentials are positive definite functions.
Then we apply this positivity to prove curious inequalities for the
Jacobi theta function $\vartheta_{4}$ and q-Gamma function $\Gamma_{q}$.
\end{abstract}

\maketitle

\section{Introduction}

For any $n\in\mathbb{N}$, the matrix $\left(a_{j,k}\right)_{j,k=1}^{n},\ a_{j,k}\in\mathbb{C}$
is called positive semidefinite if and only if ${\displaystyle \sum_{j,k=1}^{n}}a_{j,k}z_{j}\overline{z_{k}}\ge0$
for all $z_{1},\,z_{2},\dots\,z_{n}\in\mathbb{C}$. A positive semidefinite
matrix is positive definite if ${\displaystyle \sum_{j,k=1}^{n}}a_{j,k}z_{j}\overline{z_{k}}=0$
implies that $z_{1}=\dots=z_{n}=0$. Given two positive semidefinite
matrices 
\[
A=\left(a_{j,k}\right)_{j,k=1}^{n},\quad B=\left(b_{j,k}\right)_{j,k=1}^{n},\quad a_{j,k},\,b_{j,k}\in\mathbb{C},
\]
it is well-known that their Schur (Hadamard) product $A\circ B=\left(a_{j,k}b_{j,k}\right)_{j,k=1}^{n}$
is also positive semidefinite. Furthermore, it satisfies \cite{Horn,Zhang}
\[
\det\left(A\circ B\right)\ge\det(A)\cdot\det(B).
\]

A continuous function $f(x)$ on $\mathbb{R}$ with $f(0)=1$ is called
positive definite if and only if the matrices $\left(f(x_{j}-x_{k})\right)_{j,k=1}^{n}$
are positive semidefinite for all $n\in\mathbb{N}$ and $x_{1},\dots,x_{n}\in\mathbb{R}$.
The Bochner's theorem states that $f(x)$ is positive definite if
and only if there exists a probability measure $\mu(dx)$ on $\mathbb{R}$
such that \cite{Achieser}
\[
f(x)=\int_{\mathbb{R}}e^{ixy}\mu\left(dy\right).
\]
The condition $f(0)=1$ can be replaced by $f(0)>0$, then the total
mass of the measure $\mu(dx)$ is $f(0)$. If $f(x)$ is a positive
definite function, then $\overline{f(x)},\,\Re(f(x)),\,\left|f(x)\right|^{2},\,f(ax)$
are all positive definite functions where $a$ is any nonzero real
number. Furthermore, for any $m\in\mathbb{N}$ if $f_{1}(x),\dots,f_{m}(x)$
are positive definite functions, then $\prod_{n=1}^{m}f_{n}(x)$ and
$\sum_{n=1}^{m}a_{n}f_{n}(x)$ are also positive definite functions
where $a_{n}\ge0,\ n=1,\dots,m$ and $\sum_{n=1}^{m}a_{n}=1$.

In \cite{Ismail2} we derived numerous Fourier transformations for
several q-special functions. In this work we shall apply the Bochner's
theorem to prove certain combinations of Euler's q-exponentials are
positive definite functions. As a corollary of the positivities we
prove some curious inequalities for the Jacobi theta function $\vartheta_{4}$,
q-Gamma function $\Gamma_{q}$ and Euler's q-exponentials. 

\section{Main Results}
\begin{thm}
\label{thm:2.2}Let $n\in\mathbb{N}$. If $0<y_{\ell},q_{\ell}<1$
for all $1\le\ell\le n$, then 
\begin{equation}
\prod_{\ell=1}^{n}\frac{\exp\left(\frac{x^{2}}{\log q_{\ell}^{4}}\right)}{\left(y_{\ell}e^{ix};q_{\ell}\right)_{\infty}}\quad x\in\mathbb{R}\label{eq:2.1}
\end{equation}
 is a positive definite function. Furthermore, for all $m\in\mathbb{N}$
and all $x_{1},\dots,x_{m}\in\mathbb{R}$, the matrices 
\begin{equation}
\left(\prod_{\ell=1}^{n}\frac{\exp\left(\frac{\left(x_{j}-x_{k}\right)^{2}}{\log q_{\ell}^{4}}\right)}{\left(y_{\ell}e^{i(x_{j}-x_{k})};q_{\ell}\right)_{\infty}}\right)_{j,k=1}^{m}\label{eq:2.2}
\end{equation}
 are positive semidefinite. In particular, we have the inequality,
\begin{equation}
\prod_{\ell=1}^{n}\frac{\left(y_{\ell};q_{\ell}\right)_{\infty}}{\left|\left(y_{\ell}e^{ix};q_{\ell}\right)_{\infty}\right|}\exp\left(\frac{x^{2}}{\log q_{\ell}^{4}}\right)\le1,\quad x\in\mathbb{R}.\label{eq:2.3}
\end{equation}
\end{thm}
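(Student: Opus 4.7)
The plan is to recognize each factor in (2.1) as a positive definite function and then invoke the closure of the class of positive definite functions under pointwise products. The only ingredients needed are Euler's expansion of $1/(y;q)_{\infty}$ as a power series with nonnegative coefficients and the classical positive definiteness of a Gaussian.

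First I would expand, for each $\ell$,
\[
\frac{1}{(y_{\ell}e^{ix};q_{\ell})_{\infty}}=\sum_{n=0}^{\infty}\frac{y_{\ell}^{n}}{(q_{\ell};q_{\ell})_{n}}e^{inx},
\]
an absolutely convergent trigonometric series with strictly positive coefficients (since $0<y_{\ell},q_{\ell}<1$). Each term $e^{inx}$ is positive definite, as its kernel $e^{in(x_{j}-x_{k})}$ is a rank-one positive semidefinite outer product, and nonnegative combinations of positive definite functions are positive definite; hence this factor is a positive scalar multiple of a positive definite function, with value $1/(y_{\ell};q_{\ell})_{\infty}>0$ at the origin. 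Next, because $0<q_{\ell}<1$ forces $\log q_{\ell}^{4}<0$, the factor $\exp(x^{2}/\log q_{\ell}^{4})$ is a standard Gaussian $\exp(-\alpha_{\ell}x^{2})$ with $\alpha_{\ell}>0$, hence positive definite by Bochner's theorem. Multiplying the $2n$ factors gives positive definiteness of (2.1), and (2.2) is just the defining matrix condition written out.

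For (2.3) I would apply the elementary fact $|f(x)|\le f(0)$ valid for any positive definite $f$, which follows from positive semidefiniteness of the $2\times 2$ kernel at the two points $\{0,x\}$. Taking $f$ to be the function in (2.1), we have $f(0)=\prod_{\ell}1/(y_{\ell};q_{\ell})_{\infty}$, while $|f(x)|$ is the left side of (2.3) divided by $\prod_{\ell}(y_{\ell};q_{\ell})_{\infty}$; a direct rearrangement yields the claimed bound.

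The argument is essentially structural, so there is no real analytic obstacle once the Euler expansion is in hand. The only points requiring a small amount of care are that each normalization $1/(y_{\ell};q_{\ell})_{\infty}$ is positive and finite (immediate from $0<y_{\ell},q_{\ell}<1$) and that $\log q_{\ell}^{4}$ is genuinely negative so that the exponential factor is a decaying Gaussian rather than a growing one; both are built into the hypotheses.
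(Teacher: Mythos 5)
Your proposal is correct, and it reaches the theorem by a genuinely different and more elementary route than the paper. The paper treats each factor $\exp\left(\frac{x^{2}}{\log q_{\ell}^{4}}\right)/\left(y_{\ell}e^{ix};q_{\ell}\right)_{\infty}$ as a single positive definite function by citing the explicit Fourier representation (5.37) of Ismail--Zhang, whose spectral density $q^{\alpha^{2}}A_{q}\left(-q^{2\alpha}z\right)$ involves Ramanujan's entire function $A_{q}$ and is positive on the relevant ray; Bochner's theorem then applies to each factor at once, with Schur products and the $2\times2$ principal minor finishing the argument. You instead factor each $\ell$-term into a decaying Gaussian (positive definite by the classical Bochner computation, since $\log q_{\ell}^{4}<0$) times $1/\left(y_{\ell}e^{ix};q_{\ell}\right)_{\infty}$, and you handle the latter via Euler's $q$-exponential expansion $\sum_{n\ge0}y_{\ell}^{n}e^{inx}/(q_{\ell};q_{\ell})_{n}$, i.e., you exhibit it as the Fourier transform of the purely atomic measure $\sum_{n\ge0}\frac{y_{\ell}^{n}}{(q_{\ell};q_{\ell})_{n}}\delta_{n}$; the absolute convergence you note is exactly what is needed for the infinite nonnegative combination of the kernels $e^{in(x_{j}-x_{k})}$ to remain positive semidefinite. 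The two arguments are in fact linked: the continuous density in (5.37) is precisely the convolution of your Gaussian density with the discrete Euler measure, so your decomposition explains \emph{why} the paper's integrand is positive, while the paper's route buys the explicit spectral density (of independent interest in its program of $q$-Fourier transforms) at the cost of importing a nontrivial integral identity. Your closing step, $|f(x)|\le f(0)$ from the $2\times2$ kernel at $\{0,x\}$, is the same as the paper's determinant argument at $m=2$; note the paper asserts strict positivity of that determinant where only nonnegativity is warranted (and is all that (2.3) requires), so your weak-inequality formulation is actually the more careful one.
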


\begin{proof}
Since

\[
A_{q}(z)=\sum_{n=0}^{\infty}\frac{q^{n^{2}}}{(q;q)_{n}}(-z)^{n},
\]
then for $z>0,\ q\in(0,1)$ we have
\[
q^{\alpha^{2}}A_{q}\left(-q^{2\alpha}z\right)>0,\quad\alpha\in\mathbb{R}.
\]
Then by (5.37) of \cite{Ismail2}, 
\[
\frac{\exp\left(\frac{x^{2}}{\log q^{4}}\right)}{\left(ze^{ix};q\right)_{\infty}\sqrt{\log q^{-2}}}=\frac{1}{\sqrt{2\pi}}\int_{-\infty}^{\infty}q^{\alpha^{2}}A_{q}\left(-q^{2\alpha}z\right)\exp\left(-i\alpha x\right)d\alpha,
\]
where $|z|<1,\ x\in\mathbb{R}$, the inequality
\[
\frac{1}{\left(y;q\right)_{\infty}}>0,\quad y,q\in(0,1),
\]
and by the Bochner's theorem we see that the continuous function 
\begin{equation}
\frac{\exp\left(\frac{x^{2}}{\log q^{4}}\right)}{\left(ye^{ix};q\right)_{\infty}},\quad y,q\in(0,1)\label{eq:2.4}
\end{equation}
is positive semidefinite in variable $x\in\mathbb{R}$. Therefore,
for all $m\in\mathbb{N}$ and all distinct $x_{1},x_{2},\dots,x_{m}\in\mathbb{R}$,
the matrices
\begin{equation}
\left(\frac{\exp\left(\frac{(x_{i}-x_{j})^{2}}{\log q^{4}}\right)}{\left(ye^{i(x_{i}-x_{j})};q\right)_{\infty}}\right)_{i,j=1}^{m},\quad q,y\in(0,1)\label{eq:2.5}
\end{equation}
are positive semidefinite. (\ref{eq:2.1}) and (\ref{eq:2.2}) are
obtained by taking the Schur (Hadamard) product of (\ref{eq:2.4})
or (\ref{eq:2.5}) respectively. For all $m\in\mathbb{N}$, by (\ref{eq:2.2})
we get
\[
\det\left(\prod_{\ell=1}^{n}\frac{\exp\left(\frac{\left(x_{j}-x_{k}\right)^{2}}{\log q_{\ell}^{4}}\right)}{\left(y_{\ell}e^{i(x_{i}-x_{j})};q_{\ell}\right)_{\infty}}\right)_{j,k=1}^{m}>0.
\]
Then (\ref{eq:2.3}) is obtained by setting $m=2$ in the above inequality.
\end{proof}
The $q$-Gamma function is defined by \cite{Ismail2}

\[
\Gamma_{q}(x)=(1-q)^{1-x}\frac{(q;q)_{\infty}}{(q^{x};q)_{\infty}},
\]
and the Jacobi theta function $\vartheta_{4}$ is \cite{Ismail2}

\[
\vartheta_{4}\left(z;q\right)=\vartheta_{4}\left(v\vert\tau\right)=\sum_{n=-\infty}^{\infty}q^{n^{2}}\left(-z\right)^{n}=\left(q^{2},qz,q/z;q^{2}\right)_{\infty},
\]
where
\[
z=e^{2\pi iv},\ q=e^{\pi i\tau},\quad\Im(\tau)>0.
\]

\begin{cor}
If $0<q<1$, $u>0$ and $v\in\mathbb{R}$, then
\begin{equation}
\left|\left(q^{u+iv};q\right)_{\infty}\right|\ge\left(q^{u};q\right)_{\infty}q^{v^{2}/4},\label{eq:2.6}
\end{equation}
\begin{equation}
\Gamma_{q}(u)\ge\left|\Gamma_{q}(u+iv)\right|q^{v^{2}/4}\label{eq:2.7}
\end{equation}
and 
\begin{equation}
\frac{\vartheta_{4}\left(v\vert iu\right)}{\vartheta_{4}\left(0\vert iu\right)}\ge e^{-\pi v^{2}/u}.\label{eq:2.8}
\end{equation}
 
\end{cor}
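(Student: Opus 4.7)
The plan is to observe that all three inequalities are direct reformulations of the $n=1$ case of (\ref{eq:2.3}), which rearranges to
\[
\bigl|(ye^{ix};q)_{\infty}\bigr|\;\ge\;(y;q)_{\infty}\exp\!\left(\frac{x^{2}}{4\log q}\right),\qquad y,q\in(0,1),\ x\in\mathbb{R}.
\]
The proof then becomes a matter of choosing the right substitution in each case and performing the elementary bookkeeping; no new analytic input is needed beyond Theorem \ref{thm:2.2}.

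For (\ref{eq:2.6}), I would set $y=q^{u}$ and $x=v\log q$, so that $ye^{ix}=q^{u}e^{iv\log q}=q^{u+iv}$ and the Gaussian factor collapses to $\exp(v^{2}\log q/4)=q^{v^{2}/4}$. The inequality follows immediately.

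For (\ref{eq:2.7}), I would use the definition $\Gamma_{q}(u+iv)=(1-q)^{1-u-iv}(q;q)_{\infty}/(q^{u+iv};q)_{\infty}$ together with $\bigl|(1-q)^{-iv}\bigr|=1$, giving
\[
\frac{|\Gamma_{q}(u+iv)|}{\Gamma_{q}(u)}=\frac{(q^{u};q)_{\infty}}{\bigl|(q^{u+iv};q)_{\infty}\bigr|},
\]
after which (\ref{eq:2.6}) yields (\ref{eq:2.7}) at once.

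For (\ref{eq:2.8}), I would first use the triple-product expansion with $q=e^{-\pi u}\in(0,1)$ and the identity $(qe^{-2\pi iv};q^{2})_{\infty}=\overline{(qe^{2\pi iv};q^{2})_{\infty}}$ to obtain
\[
\frac{\vartheta_{4}(v|iu)}{\vartheta_{4}(0|iu)}=\frac{\bigl|(qe^{2\pi iv};q^{2})_{\infty}\bigr|^{2}}{(q;q^{2})_{\infty}^{2}},
\]
and then apply (\ref{eq:2.3}) with base $q^{2}$ in place of $q$, with $y=q$, and with $x=2\pi v$. The Gaussian exponent then reads $(2\pi v)^{2}/\log q^{8}=-\pi v^{2}/(2u)$, and squaring the lower bound gives (\ref{eq:2.8}). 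The only subtle point is the systematic sign-handling: since $\log q<0$, each substitution converts the Gaussian decay factor in $x$ into either a power $q^{\bullet}$ or the desired $e^{-\pi v^{2}/u}$, and this conversion must be tracked consistently. Beyond this routine sign arithmetic there is no genuine obstacle — the corollary is essentially three translations of one inequality into three different notational regimes.
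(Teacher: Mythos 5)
Your proposal is correct and follows essentially the same route as the paper: the same substitutions $y=q^{u}$, $x=v\log q$ into the $n=1$ case of (\ref{eq:2.3}) give (\ref{eq:2.6}) and (\ref{eq:2.7}), and the theta inequality comes from the same specialization $q\to q^{2}$, $y\to q$, $x=2\pi v$ combined with the triple-product formula. The only cosmetic difference is the order of operations in the last step — you substitute into the theta ratio and square at the end, while the paper squares (\ref{eq:2.9}) first and multiplies by $(q;q)_{\infty}$ — and both computations yield the identical exponent $-\pi v^{2}/u$.
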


\begin{proof}
From (\ref{eq:2.3}) we get
\begin{equation}
\left|\left(ye^{ix};q\right)_{\infty}\right|\ge\left(y;q\right)_{\infty}\exp\left(\frac{x^{2}}{4\log q}\right),\quad0<y,q<1,\ x\in\mathbb{R}.\label{eq:2.9}
\end{equation}
Let 
\[
x=v\log q,\ y=q^{u},\quad u\in(0,\infty),\ v\in\mathbb{R}
\]
in (\ref{eq:2.9}) we get
\[
\left|\left(q^{u+iv};q\right)_{\infty}\right|\ge\left(q^{u};q\right)_{\infty}q^{v^{2}/4}
\]
and
\[
\Gamma_{q}(u)\ge\left|\Gamma_{q}(u+iv)\right|q^{v^{2}/4},\quad0<q<1,\ u>0,\ v\in\mathbb{R}.
\]
Multiply $(q;q)_{\infty}$ to the square of (\ref{eq:2.9}) to obtain,
\[
\left(q,ye^{ix},ye^{-ix};q\right)_{\infty}\ge\left(q,y,y;q\right)_{\infty}\exp\left(\frac{x^{2}}{2\log q}\right).
\]
 Then let $q\to q^{2},\ y\to q$ in the above inequality to obtain
\begin{equation}
\left(q^{2},qe^{ix},qe^{-ix};q^{2}\right)_{\infty}\ge\left(q^{2},q,q;q^{2}\right)_{\infty}\exp\left(\frac{x^{2}}{4\log q}\right),\label{eq:2.10}
\end{equation}
which gives 
\[
\vartheta_{4}(v\vert ui)\ge\vartheta_{4}(0\vert ui)e^{-\pi v^{2}/u}
\]
 by setting 
\[
q=e^{-\pi u},\ x=2\pi v,\quad u>0,\ v\in\mathbb{R}.
\]
\end{proof}
\begin{rem}
Given any simplify connected region $\Omega$ that is not the entire
complex plane, then by Riemann's mapping theorem, \cite{Ahlfors},
there exists an analytic function $f(z)=u(z)+iv(z)$ in $\Omega$
defines a one-to-one mapping of $\Omega$ onto the right half-plane
$H=\left\{ u+iv|u>0,\ v\in\mathbb{R}\right\} $. The inequalities
(\ref{eq:2.6}), (\ref{eq:2.7}) and (\ref{eq:2.8}) imply that
\begin{equation}
\left|\left(q^{f(z)};q\right)_{\infty}\right|\ge\left(q^{\Re(f(z))};q\right)_{\infty}q^{(\Im(z))^{2}/4},\quad z\in\Omega\label{eq:2.11}
\end{equation}
\begin{equation}
\Gamma_{q}\left(\Re(f(z))\right)\ge\left|\Gamma_{q}\left(f(z)\right)\right|q^{(\Im(z))^{2}/4},\quad z\in\Omega.\label{eq:2.12}
\end{equation}
By \cite{Rademacher}
\[
\vartheta_{4}\left(v+\tau\vert\tau\right)=-e^{-\pi i\tau-2\pi iv}\vartheta_{4}\left(v\vert\tau\right),\quad\Im(\tau)>0,
\]
for $u>0,\ v\in\mathbb{R}$ we get
\[
\vartheta_{4}\left(ui-v\vert ui\right)=-e^{\pi u+2\pi iv}\vartheta_{4}\left(-v\vert ui\right)=-e^{\pi u+2\pi iv}\vartheta_{4}\left(v\vert ui\right),
\]
hence,
\begin{equation}
\left|\vartheta_{4}\left(ui-v\vert ui\right)\right|\ge\vartheta_{4}(0\vert ui)e^{\pi u\left(1-v^{2}/u^{2}\right)}.\label{eq:2.13a}
\end{equation}
Then for $z\in\Omega$,
\begin{equation}
\left|\vartheta_{4}\left(if(z)\vert\Re(f(z))i\right)\right|\ge\vartheta_{4}\left(0\vert\Re(f(z))i\right)\exp\pi\left(\Re(f(z))-\frac{(\Im(z))^{2}}{\Re(f(z))}\right).\label{eq:2.13b}
\end{equation}
In particular, since the function
\begin{equation}
f(z)=\frac{1+z}{1-z}=\frac{1-x^{2}-y^{2}}{(1-x)^{2}+y^{2}}+\frac{2yi}{(1-x)^{2}+y^{2}}\label{eq:2.14}
\end{equation}
 maps unit disk unto the right-half plane. Then for $z=x+iy$ with
$x^{2}+y^{2}<1$ we have
\begin{equation}
\left|\left(q^{\frac{1+z}{1-z}};q\right)_{\infty}\right|\ge\left(q^{\frac{1-x^{2}-y^{2}}{(1-x)^{2}+y^{2}}};q\right)_{\infty}q^{\frac{y^{2}}{\left((1-x)^{2}+y^{2}\right)^{2}}},\label{eq:2.15}
\end{equation}
 
\begin{equation}
\Gamma_{q}\left(\frac{1-x^{2}-y^{2}}{(1-x)^{2}+y^{2}}\right)\ge\left|\Gamma_{q}\left(\frac{1+z}{1-z}\right)\right|q^{\frac{y^{2}}{\left((1-x)^{2}+y^{2}\right)^{2}}}\label{eq:2.16}
\end{equation}
and
\begin{align}
 & \left|\vartheta_{4}\left(i\frac{1+z}{1-z}\bigg|\frac{1-x^{2}-y^{2}}{(1-x)^{2}+y^{2}}i\right)\right|\ge\vartheta_{4}\left(0\bigg|\frac{1-x^{2}-y^{2}}{(1-x)^{2}+y^{2}}i\right)\label{eq:2.17}\\
 & \times\exp\left(\frac{\pi\left(4y^{2}-\left(1-x^{2}-y^{2}\right)^{2}\right)}{\left((1-x)^{2}+y^{2}\right)\left(1-x^{2}-y^{2}\right)}\right).\nonumber 
\end{align}
 
\end{rem}

\begin{thm}
Let $n\in\mathbb{N}$. If $0<z_{\ell},b_{\ell},q_{\ell}<1,\ -1<a_{\ell}<1,$
for all $1\le\ell\le n$, then the function 
\begin{equation}
\prod_{\ell=1}^{n}\frac{\left(a_{\ell}z_{\ell}e^{ix};q_{\ell}\right)_{\infty}\exp\left(\frac{x^{2}}{\log q_{\ell}^{2}}\right)}{\left(b_{\ell}e^{ix},-z_{\ell}e^{ix};q_{\ell}\right)_{\infty}}\label{eq:2.18}
\end{equation}
is positive definite in $x$. In particular, for all $m\in\mathbb{N}$
and distinct $x_{1},\dots,x_{m}\in\mathbb{R}$ the matrices 
\begin{equation}
\left(\prod_{\ell=1}^{n}\frac{\left(a_{\ell}z_{\ell}e^{i(x_{j}-x_{k})};q_{\ell}\right)_{\infty}\exp\left(\frac{(x_{j}-x_{k})^{2}}{\log q_{\ell}^{2}}\right)}{\left(b_{\ell}e^{i(x_{j}-x_{k})},-z_{\ell}e^{i(x_{j}-x_{k})};q_{\ell}\right)_{\infty}}\right)_{j,k=1}^{m}\label{eq:2.19}
\end{equation}
are positive semidefinite. Furthermore, for $x\in\mathbb{R}$ we have
\begin{equation}
\prod_{\ell=1}^{n}\left|\frac{\left(b_{\ell},-z_{\ell};q\right)_{\infty}}{\left(b_{\ell}e^{ix},-z_{\ell}e^{ix};q\right)_{\infty}}\cdot\frac{\left(a_{\ell}z_{\ell}e^{ix};q_{\ell}\right)_{\infty}}{\left(a_{\ell}z_{\ell};q_{\ell}\right)_{\infty}}\right|\le\prod_{\ell=1}^{n}\exp\left(\frac{x^{2}}{\log q_{\ell}^{2}}\right).\label{eq:2.20}
\end{equation}
\end{thm}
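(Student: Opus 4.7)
The strategy mirrors that of Theorem~\ref{thm:2.2}. First I would reduce to the single-factor case ($n=1$), then reassemble (\ref{eq:2.18}) and (\ref{eq:2.19}) by Schur (Hadamard) product closure, and finally extract (\ref{eq:2.20}) from the $m=2$ specialization.

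For the $n=1$ step, the plan is to invoke a Fourier integral representation from \cite{Ismail2}, analogous to formula (5.37) used in the proof of Theorem~\ref{thm:2.2}, that expresses
\[
\frac{(aze^{ix};q)_{\infty}\exp\!\left(\frac{x^{2}}{\log q^{2}}\right)}{(be^{ix},-ze^{ix};q)_{\infty}}
=\frac{1}{\sqrt{2\pi}}\int_{-\infty}^{\infty}K(\alpha;q,a,b,z)\,e^{-i\alpha x}\,d\alpha
\]
for an explicit density $K$ built from q-series (likely a ${}_{2}\phi_{1}$- or Ramanujan ${}_{1}\psi_{1}$-type expression matching the integrand shape $(az;q)_{\infty}/\{(b;q)_{\infty}(-z;q)_{\infty}\}$). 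Under the hypotheses $0<z,b,q<1$ and $-1<a<1$ one then verifies that $K$ is non-negative on $\mathbb{R}$, and Bochner's theorem delivers positive definiteness of the single factor. Closure of positive (semi)definiteness under Schur products then produces (\ref{eq:2.18}) and (\ref{eq:2.19}) exactly as in the last step of the proof of Theorem~\ref{thm:2.2}.

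To derive (\ref{eq:2.20}), I would specialize (\ref{eq:2.19}) to $m=2$ with $x_{1}=0,\ x_{2}=x$. Positive semidefiniteness of the resulting $2\times 2$ matrix is equivalent to $|F(x)|\le F(0)$, where $F(x)$ denotes the product in (\ref{eq:2.18}). Evaluating $F(0)=\prod_{\ell}(a_{\ell}z_{\ell};q_{\ell})_{\infty}/(b_{\ell},-z_{\ell};q_{\ell})_{\infty}$ (a positive quantity under the given parameter range) and taking the modulus of $F(x)$ (the Gaussian factor being real and positive, hence pulling out intact) yields (\ref{eq:2.20}) after a direct rearrangement of the Pochhammer ratios and the Gaussian.

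The main obstacle I anticipate is identifying the precise Fourier formula in \cite{Ismail2} whose integrand has the required product structure, and verifying non-negativity of its weight under the enlarged range $-1<a<1$ rather than the more natural $0<a<1$. This positivity check will likely require expanding $K$ as a q-series and using an identity such as the $q$-Gauss sum, the $q$-binomial theorem, or a Heine transformation to reorganize potentially alternating contributions into a manifestly non-negative form before Bochner's theorem can be applied.
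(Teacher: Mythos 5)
Your plan coincides with the paper's proof: the required representation is formula (5.83) of \cite{Ismail2}, whose Fourier weight is $\left(-bq^{\alpha+\frac{1}{2}};q\right)_{\infty}q^{\alpha^{2}/2}\,{}_{1}\phi_{1}\left(a;-bq^{\alpha+\frac{1}{2}};q,-zq^{\alpha+\frac{1}{2}}\right)$, and the remaining steps (positivity of the weight, Bochner's theorem, Schur--Hadamard closure for general $n$ and for the matrices (\ref{eq:2.19}), then the $2\times2$ determinant giving (\ref{eq:2.20})) are exactly as you describe. The one obstacle you anticipate does not arise: no $q$-Gauss or Heine transformation is needed, since for $z>0$ and $a,b<1$ every term of ${}_{1}\phi_{1}\left(a;b;q,-z\right)=\sum_{n\ge0}\frac{(a;q)_{n}q^{\binom{n}{2}}}{(b,q;q)_{n}}z^{n}$ is already non-negative because $(a;q)_{n}>0$ whenever $a<1$, so the weight is manifestly positive term by term.
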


\begin{proof}
For $q\in(0,1)$, the confluent basic hypergeometric series 
\[
_{1}\phi_{1}\left(a;b;q,z\right)=\sum_{n=0}^{\infty}\frac{\left(a;q\right)_{n}q^{\binom{n}{2}}}{\left(b,q;q\right)_{n}}\left(-z\right)^{n}
\]
satisfies 
\[
_{1}\phi_{1}\left(a;b;q,-z\right)>0,\quad z>0,\ a,b\in(-\infty,1).
\]
By (5.83) in \cite{Ismail2},
\[
\frac{\left(-aze^{ix};q\right)_{\infty}\exp\left(\frac{x^{2}}{\log q^{2}}\right)}{\left(-bq^{-1/2}e^{ix},ze^{ix};q\right)_{\infty}\sqrt{\log q^{-1}}}=\sqrt{\frac{\log q^{-1}}{2\pi}}\int_{-\infty}^{\infty}\left(bq^{\alpha};q\right)_{\infty}q^{\alpha^{2}/2}{}_{1}\phi_{1}\left(a;bq^{\alpha};q,zq^{\alpha+1/2}\right)\frac{e^{-i\alpha x}d\alpha}{\sqrt{2\pi}}
\]
we get
\begin{align*}
 & \frac{\left(aze^{ix};q\right)_{\infty}\exp\left(\frac{x^{2}}{\log q^{2}}\right)}{\left(be^{ix},-ze^{ix};q\right)_{\infty}\sqrt{\log q^{-1}}}\\
 & =\int_{-\infty}^{\infty}\left(-bq^{\alpha+\frac{1}{2}};q\right)_{\infty}q^{\alpha^{2}/2}{}_{1}\phi_{1}\left(a;-bq^{\alpha+\frac{1}{2}};q,-zq^{\alpha+\frac{1}{2}}\right)\frac{e^{-i\alpha x}d\alpha}{\sqrt{2\pi}}.
\end{align*}
Since for $q,b,z\in(0,1),\ a\in(-1,1),\ \alpha\in\mathbb{R}$ we have
\[
\left(-bq^{\alpha+\frac{1}{2}};q\right)_{\infty}q^{\alpha^{2}/2}{}_{1}\phi_{1}\left(a;-bq^{\alpha+\frac{1}{2}};q,-zq^{\alpha+\frac{1}{2}}\right)>0
\]
and
\[
\frac{\left(az;q\right)_{\infty}}{\left(b,-z;q\right)_{\infty}\sqrt{\log q^{-1}}}>0.
\]
Then by Bochner's theorem we know that the continuous function,
\begin{equation}
\frac{\left(aze^{ix};q\right)_{\infty}\exp\left(\frac{x^{2}}{\log q^{2}}\right)}{\left(be^{ix},-ze^{ix};q\right)_{\infty}}\label{eq:2.21}
\end{equation}
is a positive definite function in $x\in\mathbb{R}$. (\ref{eq:2.18})
and (\ref{eq:2.19}) are obtained by taking Schur (Hadamard) products,
and (\ref{eq:2.20}) comes from the determinant of (\ref{eq:2.19})
at $m=2$.
\end{proof}
\begin{cor}
If $0<a,b,q<1$, $-1<c<1$ and $v\in\mathbb{R}$, then
\begin{equation}
\left|\frac{\left(aq^{iv},-bq^{iv};q\right)_{\infty}}{\left(bce^{ix};q\right)_{\infty}}\right|\ge\frac{\left(a,-b;q\right)_{\infty}}{\left(bc;q\right)_{\infty}}q^{v^{2}/2}.\label{eq:2.23}
\end{equation}
 In particular,
\begin{equation}
\left|\frac{\left(b^{2}q^{2iv};q^{2}\right)_{\infty}}{\left(abq^{iv};q\right)_{\infty}}\right|\ge\frac{\left(b^{2};q^{2}\right)_{\infty}}{\left(ab;q\right)_{\infty}}q^{v^{2}/2}.\label{eq:2.24}
\end{equation}
\end{cor}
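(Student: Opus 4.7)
The plan is to derive (2.23) directly from the positive-definiteness established in the preceding theorem (concretely, as the $n=1$, $m=2$ content of (2.20), or equivalently the universal bound $|f(x)|\le f(0)$ for any continuous positive definite function $f$), and then to obtain (2.24) from (2.23) by a parameter specialization combined with the elementary product identity $(u,-u;q)_\infty=(u^2;q^2)_\infty$.

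First I would take $n=1$ in the preceding theorem and apply $|g(x)|\le g(0)$ to
$$g(x)=\frac{(aze^{ix};q)_\infty}{(be^{ix},-ze^{ix};q)_\infty}\exp\!\left(\frac{x^{2}}{\log q^{2}}\right).$$
After passing the positive exponential factor through the modulus and rearranging, this is
$$\frac{|(be^{ix},-ze^{ix};q)_\infty|}{|(aze^{ix};q)_\infty|}\ge \frac{(b,-z;q)_\infty}{(az;q)_\infty}\exp\!\left(\frac{x^{2}}{\log q^{2}}\right).$$
I would then substitute $x=v\log q$, so that $e^{ix}=q^{iv}$ and $\exp(x^{2}/\log q^{2})=\exp(v^{2}\log q/2)=q^{v^{2}/2}$, and relabel the theorem's parameters $(b,z,a)\mapsto(a,b,c)$, which sends the theorem's ranges $0<z,b,q<1$, $-1<a<1$ precisely to the corollary's ranges $0<a,b,q<1$, $-1<c<1$. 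The output is (2.23) (reading the stray $e^{ix}$ in its denominator as $q^{iv}$, which is plainly the intended expression since the left-hand side must be a function of $v$ alone).

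For (2.24) I would specialize (2.23) by setting its parameter $a$ equal to $b$ and renaming $c$ to $a$. Then $(aq^{iv},-bq^{iv};q)_\infty=(bq^{iv},-bq^{iv};q)_\infty$ collapses to $(b^{2}q^{2iv};q^{2})_\infty$ via the identity $(u,-u;q)_\infty=(u^{2};q^{2})_\infty$; applied at $v=0$ the same identity turns $(a,-b;q)_\infty=(b,-b;q)_\infty$ into $(b^{2};q^{2})_\infty$; and the denominators $(bcq^{iv};q)_\infty$ and $(bc;q)_\infty$ become $(abq^{iv};q)_\infty$ and $(ab;q)_\infty$ as required.

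I do not anticipate a substantive obstacle, only the usual bookkeeping. The one point worth verifying is the sign in the exponent: because $\log q<0$ for $0<q<1$, one has $\exp(x^{2}/\log q^{2})\in(0,1]$, and after the substitution $x=v\log q$ this becomes $q^{v^{2}/2}\in(0,1]$. Thus (2.23) is a genuine lower bound in which the constant $(a,-b;q)_\infty/(bc;q)_\infty$ is damped by a factor at most $1$, consistent with the fact that the modulus $|(\cdot\,q^{iv};q)_\infty|$ is being compared with its value at $v=0$.
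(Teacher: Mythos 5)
Your proposal is correct and follows essentially the same route as the paper's own proof: both specialize \eqref{eq:2.20} to $n=1$ (i.e.\ the $m=2$ determinant, equivalently $|f(x)|\le f(0)$), substitute $x=v\log q$ so that $e^{ix}=q^{iv}$ and $\exp(x^{2}/\log q^{2})=q^{v^{2}/2}$, relabel parameters to get \eqref{eq:2.23}, and then obtain \eqref{eq:2.24} by equating parameters and using $(u,-u;q)_{\infty}=(u^{2};q^{2})_{\infty}$ (the paper specializes before the substitution and you after, which is immaterial). Your reading of the stray $e^{ix}$ in the denominator of \eqref{eq:2.23} as $q^{iv}$ is the intended one; the paper's proof carries the same typo.
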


\begin{proof}
For $0<z,b,q<1$, $-1<a<1$ and $x\in\mathbb{R}$, let $n=1$ in (\ref{eq:2.20})
we get 

\[
\left|\frac{\left(be^{ix},-ze^{ix};q\right)_{\infty}}{\left(aze^{ix};q\right)_{\infty}}\right|\ge\frac{\left(b,-z;q\right)_{\infty}}{\left(az;q\right)_{\infty}}\exp\left(\frac{x^{2}}{\log q^{2}}\right).
\]
 In particular, 
\[
\left|\frac{\left(b^{2}e^{2ix};q^{2}\right)_{\infty}}{\left(abe^{ix};q\right)_{\infty}}\right|\ge\frac{\left(b^{2};q^{2}\right)_{\infty}}{\left(ab;q\right)_{\infty}}\exp\left(\frac{x^{2}}{\log q^{2}}\right).
\]
Then set $x=v\log q,\ v\in\mathbb{R}$ to get
\[
\left|\frac{\left(bq^{iv},-zq^{iv};q\right)_{\infty}}{\left(aze^{ix};q\right)_{\infty}}\right|\ge\frac{\left(b,-z;q\right)_{\infty}}{\left(az;q\right)_{\infty}}q^{v^{2}/2}
\]
 and
\[
\left|\frac{\left(b^{2}q^{2iv};q^{2}\right)_{\infty}}{\left(abq^{iv};q\right)_{\infty}}\right|\ge\frac{\left(b^{2};q^{2}\right)_{\infty}}{\left(ab;q\right)_{\infty}}q^{v^{2}/2}.
\]
\end{proof}
Then the corollary is obtained by renaming variables.

Let $q=e^{-2k^{2}}$ and $|q|<1$, from a Ramanujan's identity \cite{Gasper}
\begin{align*}
 & \int_{-\infty}^{\infty}\frac{e^{-x^{2}+2mx}dx}{\left(ae^{2ikx}q^{1/2},be^{-2ikx}q^{1/2};q\right)_{\infty}}\\
 & =\frac{\sqrt{\pi}e^{m^{2}}\left(-aqe^{2imk},-bqe^{-2imk};q\right)_{\infty}}{\left(abq;q\right)_{\infty}}
\end{align*}
 we get 
\begin{align*}
 & \int_{-\infty}^{\infty}\frac{e^{-x^{2}+2imx}dx}{\left|\left(ce^{2ikx};q\right)_{\infty}\right|^{2}}\\
 & =\frac{\sqrt{\pi}e^{-m^{2}}\left(-cq^{1/2}e^{-2mk},-\overline{c}q^{1/2}e^{2mk};q\right)_{\infty}}{\left(\left|c\right|^{2};q\right)_{\infty}}.
\end{align*}
 Then we have the following:
\begin{thm}
For all $n\in\mathbb{N}$ and 
\begin{equation}
0<c_{j}<1,\ k_{j}>0,\ q_{j}=e^{-2k_{j}^{2}},\ x\in\mathbb{R},\label{eq:2.25}
\end{equation}
the function 
\begin{equation}
\prod_{j=1}^{n}e^{-x^{2}}\left(-c_{j}e^{-k_{j}^{2}-2xk_{j}},-c_{j}e^{-k_{j}^{2}+2xk_{j}};q_{j}\right)_{\infty}\label{eq:2.26}
\end{equation}
is positive definite. Furthermore, for all $m\in\mathbb{N}$ and $x_{1},\dots,x_{m}\in\mathbb{R}$
the matrices
\begin{equation}
\left(\prod_{j=1}^{n}e^{-\left(x_{r}-x_{s}\right)^{2}}\left(-c_{j}e^{-k_{j}^{2}-2\left(x_{r}-x_{s}\right)k_{j}},-c_{j}e^{-k_{j}^{2}-2\left(x_{s}-x_{r}\right)k_{j}};q_{j}\right)_{\infty}\right)_{r,s=1}^{m}\label{eq:2.27}
\end{equation}
 are positive semidefinite and for $0<c<q^{1/2},\ k>0,\ q=e^{-2k^{2}}$
we have
\begin{equation}
\left(-ce^{-2xk},-ce^{2xk};q\right)_{\infty}\le e^{x^{2}}\left(-c,-c;q\right)_{\infty},\quad x\in\mathbb{R}.\label{eq:2.28}
\end{equation}
 
\end{thm}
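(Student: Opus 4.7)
The plan is to follow exactly the template used for Theorems~\ref{thm:2.2} and its successor: realize each factor in (\ref{eq:2.26}) as a Fourier transform of an explicit nonnegative function, invoke Bochner's theorem to get positive definiteness, take Hadamard products over $j=1,\dots,n$ to obtain (\ref{eq:2.26}) and (\ref{eq:2.27}), and extract the scalar inequality (\ref{eq:2.28}) from the nonnegativity of the $2\times 2$ determinant.

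Concretely, I would start from the Ramanujan-type integral identity displayed just above the theorem. Specializing the parameters by taking $c$ real with $0<c<1$ (so $\overline{c}=c$), and writing $q^{1/2}=e^{-k^{2}}$, it rewrites the right-hand side as a constant multiple of $e^{-m^{2}}\bigl(-ce^{-k^{2}-2mk},-ce^{-k^{2}+2mk};q\bigr)_{\infty}$. Renaming the spectral variable $m\mapsto x$, this shows that
\[
e^{-x^{2}}\bigl(-ce^{-k^{2}-2xk},-ce^{-k^{2}+2xk};q\bigr)_{\infty}
=\frac{(c^{2};q)_{\infty}}{\sqrt{\pi}}\int_{-\infty}^{\infty}\frac{e^{-t^{2}}}{\bigl|(ce^{2ikt};q)_{\infty}\bigr|^{2}}\,e^{2ixt}\,dt.
\]
The integrand $e^{-t^{2}}/|(ce^{2ikt};q)_{\infty}|^{2}$ is manifestly nonnegative for $t\in\mathbb{R}$, and the prefactor $(c^{2};q)_{\infty}$ is positive when $0<c<1$. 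Hence, after a trivial rescaling $y=2t$, Bochner's theorem applies and the continuous function on the left-hand side is positive definite in $x\in\mathbb{R}$.

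Given this, (\ref{eq:2.26}) follows from the standard fact (recalled in the Introduction) that a finite product of positive definite functions is positive definite; and (\ref{eq:2.27}) is just the corresponding statement for the associated kernel matrices, obtained as the Schur (Hadamard) product of the individual positive semidefinite matrices. For (\ref{eq:2.28}), I would specialize to $n=1$, $m=2$, with nodes $x_{1}=0$ and $x_{2}=x$. The function is even in $x$ (the two Pochhammer arguments swap under $x\mapsto -x$), so the resulting $2\times2$ matrix is real and symmetric; its $(1,1)$ and $(2,2)$ entries equal $(-cq^{1/2};q)_{\infty}^{2}=(-cq^{1/2},-cq^{1/2};q)_{\infty}$, and its off-diagonal entries equal $e^{-x^{2}}(-cq^{1/2}e^{-2xk},-cq^{1/2}e^{2xk};q)_{\infty}$. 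Nonnegativity of the determinant yields the inequality after the change of name $c\leftarrow cq^{1/2}$, which converts the constraint $0<c<1$ into $0<c<q^{1/2}$ as stated.

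No step is genuinely hard: the heavy lifting is already carried out by the cited Ramanujan integral, and the rest of the argument is a direct transcription of the Bochner/Schur-product machinery used earlier in the paper. The only point that requires a little care is verifying the evenness of the function under $x\mapsto -x$ (so that the $2\times 2$ matrix is symmetric with identical diagonal entries) and tracking the rescaling $c\to cq^{1/2}$ that explains the shift from $0<c<1$ in the main positivity statement to $0<c<q^{1/2}$ in the corollary inequality (\ref{eq:2.28}).
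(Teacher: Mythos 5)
Your proposal is correct and follows exactly the argument the paper intends: the paper states this theorem without a separate proof, having just derived the specialized Ramanujan integral $\int_{-\infty}^{\infty}e^{-t^{2}+2ixt}\,\big|\left(ce^{2ikt};q\right)_{\infty}\big|^{-2}dt=\sqrt{\pi}\,e^{-x^{2}}\left(-cq^{1/2}e^{-2xk},-cq^{1/2}e^{2xk};q\right)_{\infty}/(c^{2};q)_{\infty}$, which is precisely your Fourier representation with nonnegative integrable density, after which Bochner's theorem, Schur products, and the $2\times2$ determinant at $x_{1}=0,\ x_{2}=x$ yield (\ref{eq:2.26})--(\ref{eq:2.28}) as you describe. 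Your bookkeeping of the substitution $c\leftarrow cq^{1/2}$ (turning $0<c<1$ into $0<c<q^{1/2}$) and the evenness check are also exactly what is needed, so the reconstruction is complete and faithful to the paper.
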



\begin{thebibliography}{1}
\bibitem{Achieser} N. I. Achieser, Theory of Approximation, Dover
Publications, Inc., New York, 1992.

\bibitem{Ahlfors}L. Ahlfors, Complex Analysis, 3rd edition, McGraw-Hill,
1979.

\bibitem{Gasper} G. Gasper and M. Rahman, \emph{Basic Hypergeometric
Series}, Cambridge University Press, 2nd edition, Cambridge, 2004.

\bibitem{Horn} R. A. Horn and C. R. Johnson, Matrix Analysis, Cambridge
University Press, Cambridge 1992.

\bibitem{Ismail1} R. A. Horn and C. R. Johnson, Matrix Analysis,
Cambridge University Press, Cambridge 1992.M. E. H. Ismail, \emph{Classical
and Quantum Orthogonal Polynomials in One Variable, }Cambridge University
Press, Cambridge, 2005.

\bibitem{Ismail2}Mourad Ismail and R. Zhang, Integral and series
representations of q-polynomials and functions: Part I, Analysis and
Applications, Volume 16, (2), 209-281. 

\bibitem{Rademacher}H. Rademacher, Topics in Analytic Number Theory,
Die Grundlehren der math. Wissenschaften, Band 169, Springer-Verlag,
Berlin, 1973.

\bibitem{Zhang}R. Zhang, On Certain Positive Semidefinite Matrices
of Special Functions, chapter 27 in \textquotedblleft Frontiers of
Orthogonal Polynomials and q-Series\textquotedblright , edited by
X. Li and Z. Nashed, World Scientific, 2018.
\end{thebibliography}
\end{document}